\newtheorem{tm}{Theorem}[section]
\newtheorem{lemma}[tm]{Lemma}
\newtheorem{theorem}{Theorem}[section]
\newtheorem{corollary}[theorem]{Corollary}
\newtheorem{proposition}[theorem]{Proposition}
\newcommand{\beqa}{\begin{eqnarray*}}
\newcommand{\eeqa}{\end{eqnarray*}}
\newcommand{\field}[1]{\mathbb{#1}}
\newcommand{\bR}{\field{R}}        
\newcommand{\bZ}{\field{Z}}        
\newcommand{\bC}{\field{C}}        
\def\la{\lambda}
 \def\cF{\mathcal{F}}              
 \def\cS{\mathcal{S}}
 \def\cD{\mathcal{D}}
 \def\cB{\mathcal{B}}
 \def\cM{\mathcal{M}}
 \def\cC{\mathcal{C}}
 \def\cN{\mathcal{N}}
 \def\cT{\mathcal{T}}
\def\a{\aleph}
\def\rd{\bR^d}
\def\intrd{\int_{\rd}}
\def\R{\right)}
\def\<{\left<}
\def\>{\right>}
\def\mv1{M_v^1}
\def\o{\xi}
\def\a{\alpha}
\def\z{\zeta}
\def\ZZ{\mathbb{Z}}
\def\N{\mathbb{N}}
\def\R{\mathbb{R}}
\def\Ren{\mathbb{R}^d}
\def\Renn{\mathbb{R}^{2d}}
\def\Fur{\mathcal{F}}
\def\Sn2{S_{2}(L^{2}(\Ren))}
\def\S1{S_{1}(L^{2}(\Ren))}
\def\sig00{\sigma_{0,0}}
\def\la{\langle}
\def\ra{\rangle}
\begin{document}

\title[]{{T}he {C}auchy {P}roblem  for the {V}ibrating {P}late {E}quation in modulation spaces}

\author{Elena Cordero and Davide Zucco}
\address{Department of Mathematics,  University of Torino,
Via Carlo Alberto 10, 10123
Torino, Italy}
\email{elena.cordero@unito.it}
\email{davide.zucco@unito.it}

\keywords{Modulation spaces, Wiener
amalgam spaces,  Vibrating plate  equation}

\subjclass[2000]{42B15,35C15}

\date{}

\begin{abstract} The local solvability of the Cauchy problem for the nonlinear vibrating plate equation
is showed in the framework of modulation spaces.
In the opposite direction, it is proved that there is no local wellposedness in Wiener amalgam spaces even for the solution to the homogeneous
vibrating plate equation.
\end{abstract}

\maketitle

\section{Introduction and results}
The study of the wellposedness of the Cauchy problem
for the vibrating plate equation (and, more generally, $p$-evolution equations) in
 Sobolev spaces and in Gevrey classes has been  performed extensively by many authors (see, e.g., \cite{AgliardiCicognani,AgliardiZanghirati,Kinoshita} and references therein). The techniques employed there essentially use classical calculus for pseudodifferential operators  and obtain the wellposedness applying a fixed point argument.

In this  note we study the  Cauchy problem for
 the nonlinear vibrating plate equation (NLVP) in the
framework of modulation spaces and Wiener amalgam spaces. Modulation and Wiener amalgam  spaces were introduced by Feichtinger in the 80s \cite{F1,feichtinger80} and soon they revealed
 to be the natural framework for the Time-Frequency Analysis \cite{grochenig}.\par
 Recently, they have been employed in the study of PDE's. In
 particular, let us recall their applications to local wellposedness  for the Schr\"odinger and wave
equation \cite{benyi,benyi3}. Moreover,  let us highlight the deep
and pioneering works \cite{baoxiang2,baoxiang} on the global
wellposedness for nonlinear Schr\"odinger, wave and Klein-Gordon
equations.

The modulation spaces are a family of Banach spaces, which
contains the Sobolev spaces, and can be ``arbitrary" close to the
Schwartz spaces as well as the spaces of tempered distributions. A
Cauchy datum in a modulation space can be
   rougher  than any given one in the standard  fractional Bessel potential setting  (see definition  below).
   Indeed, the roughness of the initial data is useful in many applications.
Contrary to what happens in the case of modulation spaces, we
shall show that there is no wellposedness when we consider initial
data in Wiener amalgam spaces.\par
  Precisely, we shall prove the existence and uniqueness of solutions in modulation spaces to the Cauchy problem for NLVP:
\begin{equation}\label{cpw}
\begin{cases}
\partial^2_t u+\Delta^2_x u=F(u)\\
u(0,x)=u_0(x),\,\,
\partial_t u (0,x)=u_1(x),
\end{cases}
\end{equation}
with $t\in\R$, $x\in\R^d$, $d\geq1$, $\Delta_x=\partial^2_{x_1}+\dots \partial^2_{x_d}$, $\Delta^2 u=\Delta(\Delta u)$. $F$ is a scalar
 function on $\bC$, with
 $F(0)=0$. The solution  $u(t,x)$ is  a complex valued function of $(t,x)\in \R\times\rd$. We will consider
 the case in which $F$ is an entire
 analytic function (in the
 real sense), and we shall highlight the
 special case
 $F(u)=\lambda|u|^{2k}u$,
 $\lambda\in\mathbb{C}$,
 $k\in\mathbb{N}$,
 where we have better
 results.\par
  The arguments mainly rely on recent results for Fourier multipliers. Indeed, the integral version of the
problem \eqref{cpw} has the
form
\begin{equation}\label{solop}
    u(t,\cdot)=K'(t)u_0+K(t)u_1+\mathcal{B}F(u),
\end{equation}
where\begin{equation}\label{op2}
 K'(t)=\cos(t\Delta),\quad
K(t)=\frac{\sin(t\Delta)} {\Delta},\quad
\mathcal{B}=\int _0^t K(t-\tau)\cdot d\tau.
\end{equation}
Here, for every fixed $t$,
the operators $K'(t), K(t)$
in \eqref{op2} are Fourier
multipliers with symbols
\begin{equation}\label{simboli}
\sigma_0(\xi)=\cos(4\pi^2 t|\o|^2),\quad \sigma_1(\xi)=\frac{\sin(4\pi^2 t|\o|^2)}{4\pi^2
|\o|^2},\,\, \o\in\rd.
\end{equation}
We recall
that
 given a function
$\sigma$ on $\rd$ (the
so-called symbol of the
multiplier or,
 simply, multiplier),  the
corresponding Fourier
multiplier operator
$H_{\sigma}$ is formally
defined by
\begin{equation}\label{FM}
H_\sigma f(x)=\intrd e^{2\pi i
x\xi}\sigma(\xi) \hat{f}(\xi)\,d\xi.
\end{equation}
Here the Fourier transform is normalized to be
${\hat
  {f}}(\xi)=\Fur f(\xi)=\int
f(y)e^{-2\pi i t\xi}dy$.
So, continuity properties for
multipliers in suitable
spaces yield estimates for
the linear part of the
equation. These latter are
then combined with classical fixed point
arguments to obtain local
wellposedness in modulation spaces for  \eqref{cpw}.
 \par
   Contrary to what happens for  other equations such as the wave equation \cite[Thm. 4.4]{CNwave},  there is no local wellposedness of \eqref{cpw} in the framework of Wiener amalgam spaces. Precisely, we shall show that the latter already  fails  for  the homogeneous case $F\equiv0$. \par

Although the techniques used are essentially standard, we think it
is worth detailing the study of the Cauchy problem for the NLVP
equation, in view of its many applications  in architecture and
engineering, see for example \cite{KV}.

 In order to state our results,
 we first introduce the spaces we
  deal with
  (\cite{F1,grochenig,benyi3}).
Let $T_x$ and $M_\xi$ be
the so-called translation and
modulation operators, defined by $T_x
g(y)=g(y-x)$ and $M_\xi g(y)=e^{2\pi
i\xi y}g(y)$. Let $g\in\cS(\rd)$ be a
non-zero window function in the Schwartz class  and consider
the so-called short-time Fourier
transform (STFT) $V_gf$ of a
function/tempered distribution $f$ with
respect to the the window $g$:
\[
V_g f(x,\o)=\la f, M_{\o}T_xg\ra =\int e^{-2\pi i \o y}f(y)\overline{g(y-x)}\,dy,
\]
i.e.,  the  Fourier transform $\cF$
applied to $f\overline{T_xg}$. \par For
$s\in\R$, we consider the weight
function $\la x\ra^{s}=(1+|x|^2)^{s/2},
x\in\rd. $ If $1\leq p, q\leq\infty$,
$s\in\R$, the {\it modulation space}
$\mathcal{M}^{p,q}_{s}(\R^d)$ is
defined as the closure of the Schwartz
class $\cS(\rd)$ with respect to the norm
\[
\|f\|_{\mathcal{M}_{s}^{p,q}}=\left(\intrd\left(\intrd |V_gf(x,\o)|^p dx\right)^{q/p}\la\o\ra^{sq} d\o\right)^{1/q}
\]
(with obvious modifications when $p=\infty$ or $q=\infty$). We set
 $\mathcal{M}^p_s=\mathcal{M}^{p,p}_s$,
$\mathcal{M}^{p,q}=\mathcal{M}_{0}^{p,q}$ and
$\mathcal{M}^p=\mathcal{M}^{p}_0$.
\par Modulation spaces are Banach spaces whose  definition is
independent of the choice of the window $g\in\cS(\rd)$. Among
them,  the following well-known function spaces occur:
$\mathcal{M}^{2}(\Ren)=L^2(\Ren)$ and the  Sobolev spaces:
$$ \mathcal{M}^2_{s}(\Ren)=H^s(\Ren)=\{f\,:\,\hat{f}(\o)\la \o\ra^s\in
L^2(\Ren)\}.$$
We also recall the following properties: $\mathcal{M}^{p_1,q_1}_s\hookrightarrow
\mathcal{M}^{p_2,q_2}_s$, if
$p_1\leq p_2$ and $q_1\leq
q_2$,
$(\mathcal{M}_{s}^{p,q})'=\mathcal{M}_{-s}^{p',q'}$.\par
Other properties and more general definitions of modulation spaces can now  be found in  textbooks \cite{grochenig}.

The   Wiener amalgam spaces \cite{feichtinger90} can be defined
using the STFT as well. Namely, for a fixed non-zero window
function $g\in\cS(\rd)$, the Wiener amalgam space $W(\cF
L^p_s,L^q_\gamma)$ is  the closure of the Schwartz class
$\cS(\rd)$ with respect to the norm
\begin{equation}\label{wamalgnorm}
\|f\|_{W({\cF
L}^p_s,L^q_\gamma)}=\left(\int_{\rd}\left(\int_{\rd}|V_{g}f(z,\zeta)|^p\la \zeta\ra^{sp}\,
d\z\right)^{q/p}\, \la z\ra^{\gamma q} dz\right)^{1/q}
\end{equation}
(with obvious modifications when $p=\infty$ or $q=\infty$).
 This definition is independent of the test
function $g\in\cS(\rd)$. \par
For more general definitions and properties of Wiener amalgam spaces we refer to \cite{feichtinger90}.

Observe that, for $p=q,s = 0$ and
 $\gamma= 0$,  we have
\begin{equation}\label{Wienermod}\|f\|_{W({\cF
L}^p,L^p)}=\left(\int_{\rd}\int_{\rd}|V_{g}f(z,\zeta)|^p\,
 dz\,d\z\right)^{1/p}\asymp  \|f\|_{\mathcal{M}^p},
\end{equation}
that is, $W(\cF L^{p},L^p)=\mathcal{M}^p$.

The local wellposedness results for modulation spaces read as follows:
\begin{theorem}\label{T1} Assume $s\geq 0$,
 $1\leq p\leq\infty$,
 $(u_0,u_1)\in \mathcal{M}^{p,1}_s(\rd)\times
\mathcal{M}^{p,1}_{s-2}(\rd)$
and $F(z)=\sum_{j,k=0}^\infty
c_{j,k} z^j \bar{z}^k$, an
entire real-analytic function
on $\bC$ with $F(0)=0$. For
every $R>0$, there exists
$T>0$ such that for every
$(u_0,u_1)$ in the ball $B_R$
of center $0$ and radius $R$
in $\mathcal{M}^{p,1}_s(\rd)
\times\mathcal{M}^{p,1}_{s-2}(\rd)$
there exists a unique
solution $u\in
\cC^0([0,T];\mathcal{M}^{p,1}_s(\rd))$
to \eqref{solop}.
Furthermore, the map
$(u_0,u_1)\mapsto u$ from
$B_R$ to $
\cC^0([0,T];\mathcal{M}^{p,1}_s(\rd))$
is Lipschitz continuous.
\end{theorem}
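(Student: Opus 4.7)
The strategy is a standard Banach contraction argument applied to the integral formulation \eqref{solop}. On $X_T := \cC^0([0,T];\mathcal{M}^{p,1}_s(\rd))$ introduce the map
$$\Phi(u)(t) := K'(t)u_0 + K(t)u_1 + \mathcal{B}F(u)(t),$$
and show that, for $(u_0,u_1)\in B_R$ and $T=T(R)>0$ sufficiently small, $\Phi$ leaves invariant a closed ball of $X_T$ centered at the origin and is a contraction there. Once this is achieved, Banach's fixed point theorem yields a unique fixed point, i.e.\ the solution $u$, and Lipschitz dependence on the initial data follows in a standard way by subtracting the fixed-point equations associated with two pairs of data.

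The proof rests on two main ingredients. \emph{(i) Fourier multiplier estimates} for the symbols in \eqref{simboli}. The identity $\cos(4\pi^2 t|\xi|^2)=\tfrac12(e^{4\pi^2 it|\xi|^2}+e^{-4\pi^2 it|\xi|^2})$ decomposes $K'(t)$ into Schr\"odinger-type propagators $e^{\mp it\Delta}$, whose boundedness on $\mathcal{M}^{p,q}_s$ with polynomial-in-$t$ operator norm is well known in time-frequency analysis. The symbol $\sin(4\pi^2 t|\xi|^2)/(4\pi^2|\xi|^2)$ is smooth and decays like $|\xi|^{-2}$, so coupling the Schr\"odinger bound with the fact that $\langle\xi\rangle^{-2}$ gains two orders of weight (mapping $\mathcal{M}^{p,1}_{s-2}$ continuously into $\mathcal{M}^{p,1}_s$) yields the uniform-in-$t\in[0,T]$ estimates
$$\|K'(t)u_0\|_{\mathcal{M}^{p,1}_s}\lesssim C_T\|u_0\|_{\mathcal{M}^{p,1}_s}, \qquad \|K(t)u_1\|_{\mathcal{M}^{p,1}_s}\lesssim C_T\|u_1\|_{\mathcal{M}^{p,1}_{s-2}},$$
together with $\|\mathcal{B}v\|_{X_T}\lesssim T C_T\sup_{\tau\in[0,T]}\|v(\tau)\|_{\mathcal{M}^{p,1}_{s-2}}$ after integrating in $\tau$. \emph{(ii) Banach algebra and analyticity.} For $s\geq 0$, $\mathcal{M}^{p,1}_s$ is a Banach algebra under pointwise multiplication, and $\|\bar u\|_{\mathcal{M}^{p,1}_s}=\|u\|_{\mathcal{M}^{p,1}_s}$; by induction $\|u^j\bar u^k\|_{\mathcal{M}^{p,1}_s}\leq C^{j+k-1}\|u\|^{j+k}_{\mathcal{M}^{p,1}_s}$. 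Since $F$ is entire with $F(0)=0$, termwise summation of $F(u)=\sum c_{j,k}u^j\bar u^k$ delivers
$$\|F(u)\|_{\mathcal{M}^{p,1}_s}\leq \Psi(\|u\|_{\mathcal{M}^{p,1}_s}), \qquad \|F(u)-F(v)\|_{\mathcal{M}^{p,1}_s}\leq L(R')\|u-v\|_{\mathcal{M}^{p,1}_s}$$
whenever $\|u\|,\|v\|\leq R'$, with $\Psi$ entire and $\Psi(0)=0$. The embedding $\mathcal{M}^{p,1}_s\hookrightarrow \mathcal{M}^{p,1}_{s-2}$ then places $F(u)$ in the space on which $\mathcal{B}$ acts.

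Combining, for $(u_0,u_1)\in B_R$ the linear part has $X_T$-norm at most $C_T R$, while the nonlinear part is bounded by $T C_T\Psi(R')$. Choosing $R'$ of order $C_T R$ and $T$ small enough (depending only on $R$, since $C_T$ is monotone in $T$), $\Phi$ sends the ball of radius $R'$ into itself with Lipschitz constant $\leq 1/2$, whence the claim. The main technical obstacle is ingredient \emph{(i)}, in particular the two-order smoothing of $K(t)$: this is what dictates the asymmetric choice of exponents $s$ for $u_0$ and $s-2$ for $u_1$ and distinguishes the plate equation from the wave and Schr\"odinger cases; the remaining steps are the standard fixed-point machinery.
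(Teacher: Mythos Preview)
Your proposal is correct and follows essentially the same approach as the paper: a contraction/fixed-point argument on $\cC^0([0,T];\mathcal{M}^{p,1}_s)$, with the linear estimates for $K'(t)$ and $K(t)$ coming from Schr\"odinger-type multiplier bounds (the paper packages these via Wiener amalgam norms and dilation properties, Propositions~\ref{A1} and~\ref{mainbothW}, but the content is the same), and the nonlinear estimate coming from the Banach algebra property of $\mathcal{M}^{p,1}_s$ (the paper's Lemma~\ref{L2} with $q=r=1$). The only cosmetic difference is that the paper invokes the abstract iteration scheme of Proposition~\ref{AIA} rather than a direct Banach contraction, and cites \cite{CNwave} for the Lipschitz bound on $F$; your sketch covers all the essential points.
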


 For better results concerning the nonlinearity $F(u)=\lambda|u|^{2k} u$ we refer to  Theorem \ref{T2}.

Here the  tools employed follow the pattern of similar Cauchy
problems studied for other equations such as the Schr\"odinger,
wave and Klein-Gordon equations  \cite{benyi,benyi3,CNwave}. Let
us a quote \cite{baoxiang3,baoxiang2,baoxiang} as inspiring works
on this topic. We remark that wave-front properties in the context
of modulation space theory have been achieved for non-linear PDE's
in \cite{pilipovic}.

The norm of $K'(t), K(t)$ in \eqref{op2}, as bounded operators on
modulation spaces, is controlled by the Wiener amalgam norm of the
symbols (see Proposition \ref{L1} below). A new tool for computing
the latter, is given by considering the time variable as a
dilation parameter and applying the dilation properties for
weighted Wiener amalgam spaces contained in \cite{CNK10}.\par The
negative result concerns the unboundedness of the multiplier
$K'(t)$ into the (unweighted) Wiener amalgam spaces $W(\cF
L^p,L^q)$, when $p\not=q$. The case $p=q$ gives $W(\cF
L^{p},L^p)=\mathcal{M}^p$ (see \eqref{Wienermod}) and we come back
to modulation spaces. Indeed, the related Fourier multiplier
$T_\tau$, having symbol $\tau(\xi)=e^{\pi i t|\xi|^2}$, is
unbounded on $W(\cF L^p,L^q)$, when $p\not=q$, see Proposition
\ref{noWiener}.  Hence, in contrast to what happens for other
equations such as the wave equation  \cite[Thm. 4.4]{CNwave},
there is no wellposedness of \eqref{cpw} in Wiener amalgam spaces.

This negative result and  dispersive estimates for the multiplier $K'(t)$, obtained in the study of Schr\"odinger equation \cite{cordero2,cordero3},  suggest to look for  Strichartz estimates in Wiener amalgam spaces. We plan to address this issue in a future work.

\section{Preliminary results and multiplier estimates}
For the local wellposedness  in modulation spaces we need to establish linear and nonlinear estimates
on suitable modulation  spaces that contain the solution $u$.
First of all, we will use estimates for Fourier multipliers on modulation spaces. The following result \cite{CNwave} shows that the multiplier norm, as bounded operator on weighted modulation spaces, can be controlled by a suitable Wiener amalgam  norm of the corresponding symbol.
\begin{proposition} \label{L1} Let $s,t\in\R$, $1\leq p,q\leq\infty$. Let $\sigma$ be a function on $\rd$ and consider the Fourier multiplier operator defined in \eqref{FM}.  If  $\sigma\in W(\cF L^1,
 L^\infty_{t})$, then the operator $H_\sigma$
extends to  a bounded
operator from
$\mathcal{M}^{p,q}_s$
into
$\mathcal{M}^{p,q}_{s+t}$,
with
\begin{equation}\label{fumultest}
    \|H_\sigma f\|_{\mathcal{M}^{p,q}_{s+t}}
    \lesssim \|\sigma\|_{W(\cF L^1,
    L^\infty_{t})}\|f\|_{\mathcal{M}^{p,q}_{s}}.
\end{equation}
\end{proposition}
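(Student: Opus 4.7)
The plan is to reduce the bound to the elementary fact that convolution with an $L^1$ kernel is bounded on every weighted modulation space, after exploiting the local structure of $W(\cF L^1,L^\infty_t)$ via a bounded uniform partition of unity (BUPU) on the frequency side. First, fix a nonnegative $\phi\in C_c^\infty(\rd)$ with $\supp\phi\subset B(0,1)$ and $\sum_{k\in\zd}\phi(\xi-k)=1$, and decompose $\sigma=\sum_{k\in\zd}\sigma_k$ with $\sigma_k=\phi(\cdot-k)\sigma$. By the standard BUPU characterization of Wiener amalgam spaces (see \cite{feichtinger90,grochenig}), the hypothesis norm is equivalent to $\sup_{k\in\zd}\la k\ra^{t}\|K_k\|_{L^1}$, where $K_k:=\cF^{-1}\sigma_k\in L^1(\rd)$.

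Next, write $H_\sigma f=\sum_{k}K_k*f$. A direct computation from the STFT definition gives, for any window $g\in\cS(\rd)$,
\[
V_g(K_k*f)(x,\omega)=\int K_k(y)\,e^{-2\pi i\omega y}\,V_gf(x-y,\omega)\,dy,
\]
so taking absolute values and applying Young's inequality in $x$ yields $\|V_g(K_k*f)(\cdot,\omega)\|_{L^p_x}\le\|K_k\|_{L^1}\|V_gf(\cdot,\omega)\|_{L^p_x}$. Since modulation space norms are independent of the chosen window, I would then specialize to a $g$ with $\hat g$ compactly supported; because $\cF(K_k*f)=\sigma_k\hat f$ is supported in $B(k,1)$, the function $\omega\mapsto V_g(K_k*f)(x,\omega)$ is supported in a ball $B(k,R)$ of fixed radius $R$.

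Summing over $k$, using the uniformly bounded overlap of the sets $B(k,R)$ in $\omega$ and the weight equivalence $\la\omega\ra^{s+t}\asymp\la k\ra^{s+t}$ on $B(k,R)$, I obtain
\[
\|H_\sigma f\|_{\mathcal M^{p,q}_{s+t}}^{q}\lesssim\sum_k\la k\ra^{(s+t)q}\|K_k\|_{L^1}^q\int_{B(k,R)}\|V_gf(\cdot,\omega)\|_{L^p_x}^q\,d\omega
\]
\[
\le\Bigl(\sup_k\la k\ra^t\|K_k\|_{L^1}\Bigr)^{q}\sum_k\int_{B(k,R)}\la\omega\ra^{sq}\|V_gf(\cdot,\omega)\|_{L^p_x}^q\,d\omega\lesssim\|\sigma\|_{W(\cF L^1,L^\infty_t)}^q\|f\|_{\mathcal M^{p,q}_s}^q,
\]
with the usual modifications when $p$ or $q$ is infinite.

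The main obstacle is the clean transfer of the weight from $\la\omega\ra^{s+t}$ to $\la k\ra^{s+t}$, which is exactly why it pays to choose $g$ with $\hat g$ compactly supported: without a frequency-localized window, one would pick up spurious $\la\omega-k\ra^{|s+t|}$ factors that have to be absorbed against the Schwartz decay of $\hat g$, complicating the kernel estimate in step two. A minor additional point is to justify the convergence $\sum_k H_{\sigma_k}f=H_\sigma f$ in $\cS'$; this follows from $\sum_k\sigma_k=\sigma$ in $\cS'$ together with moderateness of the weight $\la\cdot\ra^t$.
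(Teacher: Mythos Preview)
Your argument is correct. The paper itself does not prove this proposition; it quotes it from \cite{CNwave}, so strictly speaking there is nothing to compare against here. That said, the proof one expects (and which the paper's own toolbox points to) is considerably shorter than yours: use the weighted version of Proposition~\ref{vm}, namely $\cF:\mathcal{M}^{p,q}_s\to W(\cF L^p,L^q_s)$, to write $\|H_\sigma f\|_{\mathcal{M}^{p,q}_{s+t}}=\|\sigma\hat f\|_{W(\cF L^p,L^q_{s+t})}$, and then invoke Lemma~\ref{propWiener} with $\cF L^1\cdot\cF L^p\hookrightarrow\cF L^p$ (Young) and $L^\infty_t\cdot L^q_s\hookrightarrow L^q_{s+t}$ (pointwise). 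This two-line argument is what your proof unpacks by hand: your BUPU decomposition, the convolution identity for $V_g(K_k*f)$, and the bounded-overlap summation together amount to a direct verification of the special case $W(\cF L^1,L^\infty_t)\cdot W(\cF L^p,L^q_s)\hookrightarrow W(\cF L^p,L^q_{s+t})$. The benefit of your route is that it is entirely self-contained and makes the role of the frequency localization transparent; the benefit of the abstract route is economy and the fact that it immediately generalizes (any local/global component pair satisfying the multiplication hypotheses works). Your remark about choosing $g$ with compactly supported $\hat g$ is the right way to keep the weight transfer clean, and the convergence $\sum_k H_{\sigma_k}f=H_\sigma f$ in $\cS'$ is justified exactly as you indicate.
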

We shall use the previous criterion to establish the boundedness
of the multipliers $K(t)$ and $K'(t)$. This simply amounts to
looking for the right weighted Wiener norm of the corresponding
symbols. To chase this goal we shall apply the lemmata below (see
\cite{F1} and \cite[Lemma 3.1]{CNwave}, respectively for their
proofs).
\begin{lemma}\label{propWiener}
  For $i=1,2,3$, let $B_i$ be one of the Banach spaces
  $\cF L^q_s$ ($1\leq q\leq\infty$,
   $s\in\R$),  $C_i$ be one of the Banach spaces
  $L^p_\gamma$ ($1\leq p\leq\infty$, $\gamma\in\R$).
  If $B_1\cdot B_2\hookrightarrow B_3$ and $C_1\cdot
  C_2\hookrightarrow C_3$, we have
  \begin{equation}\label{point0}
  W(B_1,C_1)\cdot W(B_2,C_2)\hookrightarrow W(B_3,C_3).
  \end{equation}
\end{lemma}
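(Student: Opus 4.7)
The plan is to exploit the fact that Wiener amalgam norms admit a discrete local description via a bounded uniform partition of unity (BUPU), which reduces the product estimate to two independent ingredients: a local product estimate in $B_3$ and a global one in $C_3$. Concretely, I would fix a nonnegative function $\psi\in C_c^\infty(\rd)$ such that $\sum_{k\in\zd} T_k\psi\equiv 1$, and choose an auxiliary $\widetilde\psi\in C_c^\infty(\rd)$ with $\widetilde\psi\equiv 1$ on the support of $\psi$ (so that $\widetilde\psi\,\psi=\psi$). The standard theory of Wiener amalgam spaces (see \cite{feichtinger90}) gives the equivalence
\[
\|f\|_{W(B,C)}\asymp \bigl\| F_f\bigr\|_{C},\qquad F_f(x):=\sum_{k\in\zd}\|f\cdot T_k\psi\|_{B}\,\chi_{k+Q}(x),
\]
for a suitable fixed bounded cube $Q\subset\rd$, and this equivalence is independent of the choice of BUPU; in particular, replacing $\psi$ by $\widetilde\psi$ yields an equivalent norm.

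Next I would localize the product $fg$ against the BUPU. Writing
\[
(fg)\cdot T_k\psi=\bigl(f\cdot T_k\widetilde\psi\bigr)\bigl(g\cdot T_k\psi\bigr),
\]
and applying the hypothesis $B_1\cdot B_2\hookrightarrow B_3$, one obtains the local estimate
\[
\|(fg)\cdot T_k\psi\|_{B_3}\lesssim \|f\cdot T_k\widetilde\psi\|_{B_1}\,\|g\cdot T_k\psi\|_{B_2}.
\]
Summing over $k$ weighted by the corresponding characteristic functions, this gives a pointwise majorization of the form $F_{fg}(x)\lesssim \widetilde F_f(x)\,G_g(x)$, where $\widetilde F_f$ and $G_g$ are the step functions associated with $f$ (via $\widetilde\psi$) and $g$ (via $\psi$) respectively.

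Finally, I would apply the global product embedding $C_1\cdot C_2\hookrightarrow C_3$ pointwise to this majorization:
\[
\|fg\|_{W(B_3,C_3)}\asymp \|F_{fg}\|_{C_3}\lesssim \|\widetilde F_f\cdot G_g\|_{C_3}\lesssim \|\widetilde F_f\|_{C_1}\|G_g\|_{C_2}\asymp \|f\|_{W(B_1,C_1)}\|g\|_{W(B_2,C_2)}.
\]
The main technical point — and essentially the only real obstacle — is verifying that switching between the BUPUs generated by $\psi$ and by $\widetilde\psi$ changes the Wiener amalgam norm only by a multiplicative constant; this is a standard consequence of the fact that the definition of $W(B,C)$ is independent of the BUPU (provided the local component $B$ is invariant under multiplication by compactly supported bump functions, which holds for all $B_i=\cF L^{q}_s$ considered here). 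Beyond that, the proof is a clean decoupling of local and global behavior, so no further subtle estimates are required.
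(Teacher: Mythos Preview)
Your argument is correct and follows the standard Feichtinger BUPU approach; note that the paper itself does not prove this lemma but simply refers to \cite{F1}, where precisely this decoupling of local and global components via a bounded uniform partition of unity is carried out. So your proposal coincides with the proof the paper cites.
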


\begin{lemma}\label{L3}
Let  $R>0$ and
$f\in\cC_0^\infty(\rd)$ such
that supp $f\subset
B(y,R):=\{x\in\rd, |x-y|\leq
R\}$, with $y\in\rd$. Then,
for every $0<p\leq \infty$,
there exist an index
$k=k(p)\in \N$ and a constant
$C_{R,p}>0$ (which depends
only on $R$ and $p$)  such
that
\begin{equation}\label{E1}
                \|f\|_{\cF L^p}\leq C_{R,p}
                \sup_{|\a|\leq k} \|\partial^\alpha f\|_{L^\infty}.
\end{equation}
\end{lemma}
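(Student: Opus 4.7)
The plan is to reduce the problem to a pointwise decay estimate for $\hat f$ and then integrate it against a suitable power weight.

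First I would observe that translation acts only by a modulation on the Fourier side, so $|\hat f(\xi)|$ and hence $\|f\|_{\cF L^p}$ are invariant under replacing $f$ by $T_{-y}f$; also the right-hand side is unchanged. Thus I may assume $y=0$ and $\supp f\subset B(0,R)$.

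Second, I would handle the endpoint $p=\infty$ as a warm-up: by the trivial bound $\|\hat f\|_{L^\infty}\leq\|f\|_{L^1}$ and Hölder's inequality on the ball $B(0,R)$, one gets
\begin{equation*}
\|f\|_{\cF L^\infty}=\|\hat f\|_{L^\infty}\leq |B(0,R)|\,\|f\|_{L^\infty},
\end{equation*}
so $k(\infty)=0$ suffices.

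The main step, for $0<p<\infty$, uses integration by parts. For any multi-index $\alpha$,
\begin{equation*}
(2\pi i\xi)^\alpha \hat f(\xi)=\widehat{\partial^\alpha f}(\xi),\qquad \text{hence}\qquad |\xi^\alpha \hat f(\xi)|\leq (2\pi)^{-|\alpha|}\|\partial^\alpha f\|_{L^1}\leq C_R\|\partial^\alpha f\|_{L^\infty},
\end{equation*}
again because $\supp\partial^\alpha f\subset B(0,R)$. Summing over $|\alpha|\leq N$ and using $\langle\xi\rangle^N\lesssim \sum_{|\alpha|\leq N}|\xi^\alpha|+1$ yields the pointwise decay
\begin{equation*}
\langle\xi\rangle^N|\hat f(\xi)|\leq C'_{R,N}\sup_{|\alpha|\leq N}\|\partial^\alpha f\|_{L^\infty}.
\end{equation*}
Now I pick $N=N(p)\in\N$ with $Np>d$, so that $\langle\xi\rangle^{-N}\in L^p(\rd)$. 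Raising the pointwise bound to the $p$-th power and integrating gives
\begin{equation*}
\|\hat f\|_{L^p}^p\leq \bigl(C'_{R,N}\bigr)^p\bigl\|\langle\xi\rangle^{-N}\bigr\|_{L^p}^p\sup_{|\alpha|\leq N}\|\partial^\alpha f\|_{L^\infty}^p,
\end{equation*}
and taking $p$-th roots produces \eqref{E1} with $k=k(p):=N(p)=\lfloor d/p\rfloor+1$ and a constant depending only on $R$ and $p$.

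The only subtlety I anticipate is the range $0<p<1$, where $\cF L^p$ is merely a quasi-Banach space; however the argument above is purely pointwise followed by integration of $|\hat f|^p$, so the quasi-triangle inequality is never needed and the same proof goes through verbatim. Ensuring that the constant is independent of the location $y$ (absorbed by translation) and of the specific profile of $f$ (only the support size $R$ and the sup-norms of derivatives enter) is the bookkeeping point to keep in mind, but it follows automatically from the derivation.
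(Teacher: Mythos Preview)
Your argument is correct. The paper does not actually prove this lemma; it cites \cite[Lemma 3.1]{CNwave} for the proof, so there is no in-paper argument to compare against. Your approach---reduce to $y=0$ by translation invariance of $|\hat f|$, obtain the pointwise decay $\langle\xi\rangle^N|\hat f(\xi)|\lesssim \sup_{|\alpha|\leq N}\|\partial^\alpha f\|_{L^\infty}$ via integration by parts and the compact support, and then integrate against $\langle\xi\rangle^{-Np}$ with $Np>d$---is the standard one and is essentially what the cited reference does. The remark that the case $0<p<1$ needs no quasi-triangle inequality because the estimate is pointwise is exactly the right observation.
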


%

Let us now introduce the symbols
\begin{equation}\label{multsymb}
    \tilde{\sigma}_0(\xi)= \cos|\xi|^2\quad\mbox{and }\quad \tilde{\sigma}_1(\xi)= \frac{\sin |\o|^2}{|\xi|^2}.
\end{equation}
The boundedness of the  Fourier multipliers having symbols \eqref{multsymb} is a consequence of the issue below.
\begin{proposition}\label{A1} (i) The multiplier
$\tilde{\sigma}_{0}$ in
\eqref{multsymb} is in the
space $W(\cF
L^1,L^\infty)$.
(ii) The multiplier $\tilde{\sigma}_1$
in \eqref{multsymb} is in
$W(\cF L^1,L^\infty_2)$.
\end{proposition}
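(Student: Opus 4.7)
My plan is to use the standard discrete characterization of the Wiener amalgam norm via a bounded uniform partition of unity: for a fixed $\psi\in\coi(\rd)$ with $\sum_k \psi(\cdot-k)\equiv 1$,
\[
\|f\|_{W(\cF L^1,L^\infty_t)} \asymp \sup_{k\in\zd} \langle k\rangle^t \,\|\psi(\cdot-k)\,f\|_{\cF L^1}.
\]
This reduces (i) to a uniform bound on $\|\psi(\cdot-k)\cos|\xi|^2\|_{\cF L^1}$ and (ii) to the decay $\|\psi(\cdot-k)\sin|\xi|^2/|\xi|^2\|_{\cF L^1} \lesssim \langle k\rangle^{-2}$. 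The main obstacle is that a direct appeal to Lemma \ref{L3} is insufficient: the derivatives of $\cos|\xi|^2$ and $\sin|\xi|^2$ grow polynomially in $|\xi|$, so Lemma \ref{L3} would only produce bounds of the form $(1+|k|)^{k(1)}$, which destroys uniform control.

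The key trick I will use is to exploit the chirp structure $e^{\pm i|\xi|^2}$ through the algebraic identity $|\eta+k|^2 = |\eta|^2 + 2k\cdot\eta + |k|^2$. For part (i), I would write $\cos|\xi|^2 = \tfrac12(e^{i|\xi|^2}+e^{-i|\xi|^2})$ and substitute $\xi=\eta+k$ in the Fourier transform of $\psi(\xi-k)e^{i|\xi|^2}$: the factor $e^{i|k|^2}$ becomes a unimodular constant, and the cross term $e^{2ik\cdot\eta}$ corresponds to a translation on the Fourier side, so that
\[
\|\psi(\cdot-k)\,e^{i|\cdot|^2}\|_{\cF L^1} = \|\cF(e^{i|\cdot|^2}\psi)\|_{L^1},
\]
a finite constant independent of $k$ since $e^{i|\cdot|^2}\psi\in\cS(\rd)$. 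Conjugation handles $e^{-i|\xi|^2}$, finishing (i).

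For part (ii), I would first isolate the singularity at the origin via a cutoff $\chi\in\coi(\rd)$ equal to $1$ near $0$, splitting $\tilde\sigma_1 = \chi\tilde\sigma_1 + (1-\chi)\tilde\sigma_1$. The first summand is a Schwartz function, hence trivially belongs to every $W(\cF L^1, L^\infty_s)$. For the second summand I would repeat the change of variables of part (i) with the window $\psi$ replaced by
\[
\phi_k(\eta) := \psi(\eta)\,(1-\chi(\eta+k))/|\eta+k|^2.
\]
The hard technical point will be verifying that the rescaled family $\{|k|^2\phi_k\}_{|k|\geq k_0}$ is bounded in $\cS(\rd)$; this follows from the Leibniz rule applied to $|k|^2/|\eta+k|^2 = 1/|1+\eta/k|^2$ on the compact support of $\psi$, since for $|k|$ large this expression is uniformly smooth with all derivatives $O(1)$. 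Consequently,
\[
\|\psi(\cdot-k)\,\tilde\sigma_1\|_{\cF L^1} \lesssim \|\cF(e^{i|\cdot|^2}\phi_k)\|_{L^1} \lesssim \langle k\rangle^{-2},
\]
which exactly compensates the weight $\langle k\rangle^2$ and yields $\tilde\sigma_1\in W(\cF L^1, L^\infty_2)$, as required.
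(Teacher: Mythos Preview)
Your proposal is correct. For part (i) you and the paper agree: both exploit the chirp identity $|\eta+k|^2=|\eta|^2+2k\cdot\eta+|k|^2$, the paper by citing \cite[Theorem~9]{benyi}, you by reproducing that argument directly. For part (ii) both proofs first excise the singularity at the origin with a cutoff $\chi$, but then diverge. The paper factors the oscillatory remainder as $\sigma_{osc}(\xi)=\sin|\xi|^2\cdot\frac{1-\chi(\xi)}{|\xi|^2}$, quotes part (i) for the first factor, uses Lemma~\ref{L3} together with the elementary bound $|\partial^\alpha((1-\chi)/|\xi|^2)|\lesssim\langle\xi\rangle^{-2}$ to place the second factor in $W(\cF L^1,L^\infty_2)$, and then invokes the pointwise multiplication Lemma~\ref{propWiener} to recombine. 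You instead apply the chirp substitution to the full product in one stroke, absorbing $(1-\chi(\cdot+k))/|\cdot+k|^2$ into a $k$-dependent window $\phi_k$ and checking that $\{|k|^2\phi_k\}$ is bounded in $\cS$. The paper's route is more modular---it recycles part (i) and the two lemmas already on hand---while yours is self-contained and avoids the multiplication lemma entirely, at the price of analyzing the family $\{\phi_k\}$ by hand.
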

\begin{proof}   ({\it i}). Since $\tilde{\sigma}_{0}(\xi)=\frac{e^{i |\xi|^2}+ e^{-i|\xi|^2}}{2}$, the result follows from \cite[Theorem 9]{benyi}. \\
\noindent
({\it ii}). The proof follows the pattern  of \cite[Proposition 3.1]{CNwave}. Consider a function $\chi\in\cC^\infty_0(\rd)$, $ 1\leq \chi(\xi)\leq
2$, such that $\chi(\o)=1$ if
$|\o|\leq 1$, whereas
 $\chi(\o)=0$ if $|\o|\geq 2$. Then,
\begin{equation}\label{separazione}
\tilde{\sigma}_1=\chi(\o)\tilde{\sigma}_1(\o)+(1-\chi(\o))\tilde{\sigma}_1(\o):=\sigma_{sing}(\xi)+\sigma_{osc}(\xi).
\end{equation}
{\em Singularity at the
origin}. Since $\sigma_{sing}\in \cC_0^{\infty}(\rd)\subset W(\cF
L^1,L^\infty_s)$, for every $s\in\R$, the claim is proved.
\par
\noindent {\em Oscillation at
infinity.} We can split $\sigma_{osc}$ into
$$\sigma_{osc}(\xi)=\sin|\xi|^2\cdot \frac{1-\chi(\xi)}{|\xi|^2}.
$$
the first term $\sin|\xi|^2$ is in $W(\cF L^1,L^\infty)$ (see {\it (i)}), hence, if we show that the multiplier $\frac{1-\chi(\xi)}{|\xi|^2}$  is in  $W(\cF
L^p,L^\infty_2)$, the pointwise multiplication properties for Wiener amalgam spaces (cf. Lemma \ref{propWiener}) give the claim.  To prove the latter inclusion we use  Lemma \ref{L3}, applied to the function
  $\frac{1-\chi(\xi)}{|\xi|^2} T_x g\in\cC^\infty_0(\rd)$, with $g\in\cC^\infty_0(\rd)$.
  Precisely,
  $$\left|\partial^\a \left(\frac{1-\chi(\xi)}{|\xi|^2}\right)\right|\lesssim
  \la\o\ra^{-2},\quad |\partial^\a g(\o-x)|\lesssim
\la x-\o\ra^{-N},\quad
 \forall x,\o \in\rd,\ \forall N\in\mathbb{N},\, \forall \a\in\bZ^d_+.$$
Combining the preceding
estimates with the weight property  $\la\o\ra^{-\delta}\la x-\o\ra^{-|\delta|}
 \leq \la x\ra^{-\delta}$, we conclude the proof.
\end{proof}

\noindent
\begin{corollary}\label{C1}
Let $s\in\R$, $j=0,1$. For every $1\leq
p, q\leq\infty$, the Fourier
multiplier
$H_{\tilde{\sigma}_{j}}$,
with symbol
$\tilde{\sigma}_{j}$ defined
in \eqref{multsymb}, extends
to a bounded operator from
$\mathcal{M}^{p,q}_s(\rd)$
into
$\mathcal{M}^{p,q}_{s+2j}(\rd)$,
with
\begin{equation}\label{fumultest3}
    \|H_{\tilde{\sigma}_{j}} f\|_{\mathcal{M}^{p,q}_{s+2j}}\lesssim \|{\tilde{\sigma}_{j}}\|_{W(\cF L^1,
    L^\infty_{2j})}\|f\|_{\mathcal{M}^{p,q}_{s}}.
\end{equation}
\end{corollary}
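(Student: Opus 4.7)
The proof plan is almost immediate: this corollary is designed to be the direct combination of Proposition \ref{L1} with Proposition \ref{A1}, with the only task being to match the parameters correctly for the two cases $j=0$ and $j=1$.

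First I would treat the case $j=0$. By Proposition \ref{A1}(i), the symbol $\tilde{\sigma}_0(\xi)=\cos|\xi|^2$ lies in $W(\cF L^1, L^\infty)$, which is $W(\cF L^1, L^\infty_t)$ for $t=0$. Applying Proposition \ref{L1} with this choice of $t$ yields that $H_{\tilde{\sigma}_0}$ is bounded from $\mathcal{M}^{p,q}_s$ to $\mathcal{M}^{p,q}_{s+0}=\mathcal{M}^{p,q}_s$, with the estimate $\|H_{\tilde{\sigma}_0} f\|_{\mathcal{M}^{p,q}_s} \lesssim \|\tilde{\sigma}_0\|_{W(\cF L^1, L^\infty)} \|f\|_{\mathcal{M}^{p,q}_s}$, which is exactly \eqref{fumultest3} with $2j=0$.

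Next, for $j=1$, Proposition \ref{A1}(ii) gives $\tilde{\sigma}_1(\xi)=\sin|\xi|^2/|\xi|^2 \in W(\cF L^1, L^\infty_2)$, so that $\tilde{\sigma}_1$ belongs to the space $W(\cF L^1, L^\infty_t)$ with $t=2$. Proposition \ref{L1} then ensures that $H_{\tilde{\sigma}_1}$ extends boundedly from $\mathcal{M}^{p,q}_s$ into $\mathcal{M}^{p,q}_{s+2}$, with the norm bound $\|H_{\tilde{\sigma}_1} f\|_{\mathcal{M}^{p,q}_{s+2}} \lesssim \|\tilde{\sigma}_1\|_{W(\cF L^1, L^\infty_2)} \|f\|_{\mathcal{M}^{p,q}_s}$. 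This recovers \eqref{fumultest3} with $2j=2$.

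Since no new analytic step is required beyond invoking the two already established results, there is no genuine obstacle: all the work went into Proposition \ref{A1}, where one had to control the symbol $\tilde{\sigma}_1$ via the splitting \eqref{separazione} into a compactly supported singular part and an oscillatory tail handled through Lemma \ref{L3} and Lemma \ref{propWiener}. The corollary itself is simply the packaging of those symbol estimates into the mapping property of the multiplier operators $K'(t)$ and $K(t)$ that will be needed in the fixed point argument for Theorem \ref{T1}.
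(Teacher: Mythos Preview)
Your proposal is correct and matches the paper's own proof, which simply states that the result follows from Propositions \ref{A1} and \ref{L1}. You have merely spelled out the parameter matching for $j=0$ and $j=1$ explicitly, which is exactly what those two propositions combine to give.
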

\begin{proof}
The desired result follows
from Propositions \ref{A1}
and  \ref{L1}.
\end{proof}

We remark that the boundedness result for $H_{\tilde{\sigma}_{0}}$
was first proved in \cite{toft}.

We first shall show  the local wellposedness in modulation spaces of the  Cauchy problem for the homogeneous vibrating plate equation:
\begin{equation}\label{CPH}
\begin{cases}
\partial^2_t u+\Delta^2_x u=0\\
u(0,x)=u_0(x),\,\,
\partial_t u (0,x)=u_1(x).\,\,
\end{cases}
\end{equation}
This is obtained by means of  the previous estimates, combined with dilation properties for weighted Wiener amalgam spaces.

For $1\leq p\leq\infty$, let $p'$ be the conjugate exponent of $p$
($1/p+1/p'=1$). For $(1/p,1/q)\in [0,1]\times [0,1]$, using the
 notation introduced in \cite{sugimototomita}, we define the
subsets
$$ I_1=\max (1/p,1/p')\leq 1/q,\quad\quad I_1^*=\min (1/p,1/p')\geq 1/q,
$$
$$ I_2=\max (1/q,1/2)\leq 1/p',\quad\quad I_2^*=\min (1/q,1/2)\geq  1/p',
$$
$$ I_3=\max (1/q,1/2)\leq 1/p,\quad\quad I_3^*=\min (1/q,1/2)\geq
1/p,
$$
as shown in Figure 1:
\begin{figure}
\centering
\begin{minipage}[t]{.40\textwidth}
\centering
 \begin{tikzpicture}[>=latex,scale=1.2]
  \coordinate (a) at (0,0) {}; 
  \coordinate (b) at (4,0) {}; 
  \coordinate (c) at (0,4) {} ; 
  \coordinate (d) at (1.5,0) {} ;
  \node (dn) at (1.5,0) [below]{$1/2$} ; 
  \coordinate (e) at (0,1.5) {} ; 
  \node (en) at (0,1.5) [left]{$1/2$} ; 
  \coordinate (f) at (0,3) {} ; 
  \node (fn) at (0,3) [left]{$1$};
  \coordinate (g) at (3,0) {} ; 
  \node (gn) at (3,0) [below]{$1$};
  \coordinate (h) at (3,3) {} ;
  \coordinate (i) at (1.5,1.5) {} ;
  \node (in1) at (1.5,2.25)  {$I_1$};
  \node (in2) at (0.75,0.75) {$I_2$};
  \node (in3) at (2.25,0.75) {$I_3$};

  \draw[->,thick] (a)node[below]{$0$} -- (b) node[below]{$1/p$};
  \draw[->,thick] (a)--(c)node[left]{$1/q$};
  \draw[-,thick] (f) -- (h);
  \draw[-,thick] (g) -- (h);
  \draw[-,thick] (f) -- (i);
  \draw[-,thick] (d) -- (i);
  \draw[-,thick] (h) -- (i);

  \node (didascalia) at (1.5,-1) {$0<\lambda\leq 1$};
 \end{tikzpicture}
 \end{minipage}
 \hspace*{3mm}
 \begin{minipage}[t]{.40\textwidth}
 \centering

 \begin{tikzpicture}[>=latex,scale=1.2]
  \coordinate (a) at (0,0) {}; 
  \coordinate (b) at (4,0) {}; 
  \coordinate (c) at (0,4) {} ; 
  \coordinate (d) at (1.5,3) {} ;
  \node (dn) at (1.5,0) [below]{$1/2$} ; 
  \coordinate (e) at (0,1.5) {} ; 
  \node (en) at (0,1.5) [left]{$1/2$} ; 
  \coordinate (f) at (0,3) {} ; 
  \node (fn) at (0,3) [left]{$1$};
  \coordinate (g) at (3,0) {} ; 
  \node (gn) at (3,0) [below]{$1$};
  \coordinate (h) at (3,3) {} ;
  \coordinate (i) at (1.5,1.5) {} ;
  \node (in1) at (1.5,0.75)  {$I_1^*$};
  \node (in2) at (0.75,2.25) {$I_3^*$};
  \node (in3) at (2.25,2.25) {$I_2^*$};

  \draw[->,thick] (a)node[below]{$0$} -- (b) node[below]{$1/p$};
  \draw[->,thick] (a)--(c)node[left]{$1/q$};
  \draw[-,thick] (f) -- (h);
  \draw[-,thick] (g) -- (h);
  \draw[-,thick] (a) -- (i);
  \draw[-,thick] (g) -- (i);
  \draw[-,thick] (d) -- (i);

  \node (didascalia) at (1.5,-1) {$\lambda\geq 1$};
 \end{tikzpicture}
\end{minipage}
\caption{The index sets.}
\end{figure}
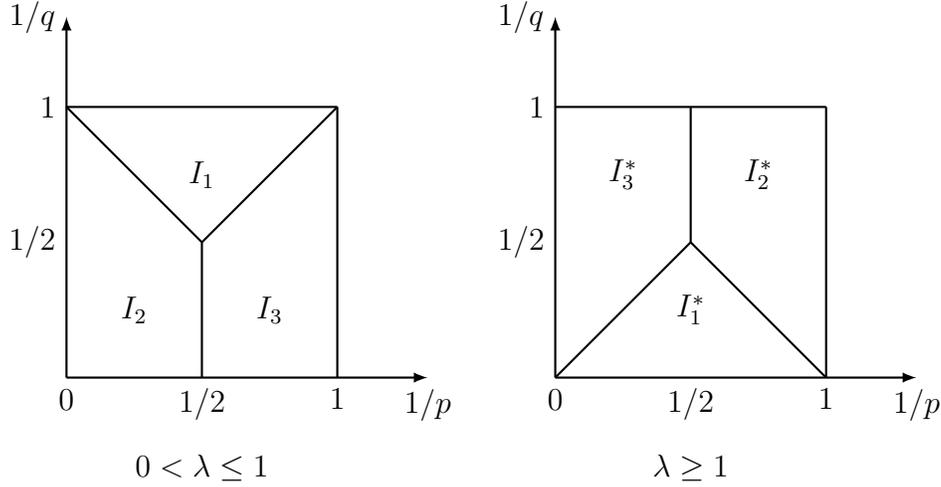

\vspace{1.5cm}
We introduce the indices:
$$ \mu_1(p,q)=\begin{cases}-1/p &  \quad {\mbox{if}} \quad (1/p,1/q)\in  I_1^*,\\
 1/q-1 &   \quad {\mbox{if}}  \quad (1/p,1/q)\in  I_2^*,\\
 -2/p +1/q&  \quad  {\mbox{if}}  \quad (1/p,1/q)\in  I_3^*,\\
 \end{cases}
 $$
and
$$ \mu_2(p,q)=\begin{cases}-1/p &  \quad {\mbox{if}} \quad (1/p,1/q)\in  I_1,\\
 1/q-1 &   \quad {\mbox{if}}  \quad (1/p,1/q)\in  I_2,\\
 -2/p +1/q&  \quad  {\mbox{if}}  \quad (1/p,1/q)\in  I_3.\\
 \end{cases}
 $$
Dilation properties for un-weighted modulation spaces have been
completely  studied in \cite{sugimototomita}. The dilation
properties for  weighted modulation spaces and Wiener amalgam
spaces have  recently been developed in \cite{CNK10}. In
particular, let us recall the following result:
 \begin{proposition}\label{mainbothW}
Let $1\leq p,q \leq\infty$, $t,s\in\R$. Then the following are true:\\
(1) There exists a constant $C>0$ such that $\forall f \in W(\cF L^p_s,L^q_t),\,\lambda\geq 1,$
\begin{align*}
C^{-1} \lambda^{d\mu_2(p',q')}\min\{1,\lambda^{t}\}&\min\{1,\lambda^{-s}\}\,\|f\|_{W(\cF L^p_s,L^q_t)} \leq \| f_\lambda\|_{W(\cF L^p_s,L^q_t)}\\[1 \jot]
&  \leq  C \lambda^{d\mu_1(p',q')}\max\{1,\lambda^{t}\}\max\{1,\lambda^{-s}\}\,\|f\|_{W(\cF L^p_s,L^q_t)}.
\end{align*}
 (2) There exists a constant $C>0$ such that $\forall f \in W(\cF L^p_s,L^q_t),\,0\leq\lambda\leq 1,$
\begin{align*}
C^{-1} \lambda^{d\mu_1(p',q')}\min\{1,\lambda^{t}\}&\min\{1,\lambda^{-s}\}\,\|f\|_{W(\cF L^p_s,L^q_t)} \leq \| f_\lambda\|_{W(\cF L^p_s,L^q_t)}\\[1 \jot]
&   \leq C\lambda^{d\mu_2(p',q')}\max\{1,\lambda^{t}\}\max\{1,\lambda^{-s}\}\,\|f\|_{W(\cF L^p_s,L^q_t)}.
\end{align*}
\end{proposition}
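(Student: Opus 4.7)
My plan is to combine a direct STFT computation with the known unweighted Sugimoto--Tomita dilation theorem and a careful bookkeeping of the weight factors. The starting point is the elementary dilation identity
\[
V_g f_\lambda(x,\xi) = \lambda^{-d}\,V_{g_{1/\lambda}} f(\lambda x,\xi/\lambda), \qquad g_{1/\lambda}(y):=g(y/\lambda),
\]
which, after the changes of variable $y=\lambda x$ and $\eta=\xi/\lambda$ inside the defining norm \eqref{wamalgnorm}, expresses $\|f_\lambda\|_{W(\cF L^p_s, L^q_t)}$ as an explicit power of $\lambda$ times a mixed norm of $V_{g_{1/\lambda}}f$ against the distorted weights $\la\lambda\eta\ra^s$ and $\la y/\lambda\ra^t$.

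Two technical steps then complete the argument. The $\lambda$-dependent window is removed via the standard change-of-window inequality $|V_{g_{1/\lambda}}f|\le C\,|V_g f|*|V_g g_{1/\lambda}|$ together with Young's inequality in Wiener amalgam spaces; the norm of $V_g g_{1/\lambda}$ is then estimated by the unweighted Sugimoto--Tomita theorem applied at the conjugate indices $(p',q')$, which produces precisely the exponents $\lambda^{d\mu_1(p',q')}$ and $\lambda^{d\mu_2(p',q')}$ in each of the three regions $I_j$ (respectively $I_j^*$); the appearance of $(p',q')$ rather than $(p,q)$ reflects the Fourier duality $\|h\|_{W(\cF L^p,L^q)}\asymp \|\hat h\|_{\mathcal{M}^{p,q}}$ implicit in the convolution step. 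The weights are handled separately via Peetre-type bounds $\la\lambda\eta\ra^s\le \max(1,\lambda^s)\la\eta\ra^{|s|}$ and its analogue for $\la y/\lambda\ra^t$, combined with a case split on the signs of $s,t$ and on whether $\lambda\gtrless 1$; this produces the advertised $\min/\max\{1,\lambda^{-s}\}$ and $\min/\max\{1,\lambda^t\}$ factors.

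The main obstacle will be matching the lower and upper bounds. The upper bound follows directly from the chain of inequalities sketched above; for the matching lower estimate I would apply the upper bound to $(f_\lambda)_{1/\lambda}=f$, thereby swapping $\lambda\leftrightarrow\lambda^{-1}$, $I_j\leftrightarrow I_j^*$ and $\min\leftrightarrow\max$. The delicate point, which constitutes the bulk of the proof, is to verify region by region in Figure~1 that the two sets of estimates produce exactly the sharp exponents $\mu_1(p',q'),\mu_2(p',q')$ stated, and that the weight factors compose consistently across all twelve combinations of $(\text{sign }s,\text{sign }t,\lambda\gtrless 1)$ and index region; this case analysis is inherent to the Sugimoto--Tomita structure and cannot be avoided.
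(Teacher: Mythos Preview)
The paper does not give a proof of this proposition: it is quoted from \cite{CNK10} and only applied afterwards. So there is no ``paper's own proof'' to compare against, and the relevant question is whether your sketch would actually work.

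There is a genuine gap in the window-removal step. Your plan is to control the $\lambda$-dependent window via
\[
|V_{g_{1/\lambda}}f|\le C\,|V_g f|*|V_g g_{1/\lambda}|
\]
and then Young's inequality, estimating the kernel $V_g g_{1/\lambda}$ by Sugimoto--Tomita. But Young forces the kernel into an $L^1$-type norm, i.e.\ $\|g_{1/\lambda}\|_{\mathcal{M}^1}$, and this behaves like $\lambda^{d}$ for $\lambda\ge 1$ regardless of $p,q$. Combined with the Jacobian factor $\lambda^{-d-d/q+d/p}$ from your change of variables, you obtain an upper bound of order $\lambda^{d/p-d/q}$, which already fails the elementary test case $p=q=2$, $s=t=0$ (where $W(\cF L^2,L^2)=L^2$ and the sharp exponent is $\lambda^{-d/2}$). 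The sharp indices $\mu_1,\mu_2$ in Sugimoto--Tomita come from interpolation between carefully chosen endpoints and explicit Gaussian/box test functions, not from a universal $L^1$ convolution bound; a change-of-window argument throws this sharpness away. Your explanation that ``the appearance of $(p',q')$ reflects the Fourier duality implicit in the convolution step'' is therefore not a mechanism that is actually present in your argument.

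The route that does give the stated exponents, and is the one taken in \cite{CNK10}, is to first establish the weighted analogue of Sugimoto--Tomita on modulation spaces $\mathcal{M}^{p,q}_{s}$ (which is where the interpolation and the $\mu_j(p,q)$ genuinely live), and then transfer to $W(\cF L^p_s,L^q_t)$ via the isomorphism $\cF:\mathcal{M}^{p,q}\to W(\cF L^p,L^q)$ of Proposition~\ref{vm}, using that $\cF(f_\lambda)=\lambda^{-d}(\cF f)_{1/\lambda}$. This is what produces the conjugate indices $(p',q')$ and the swap $\lambda\leftrightarrow\lambda^{-1}$, hence $\mu_1\leftrightarrow\mu_2$. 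Your weight bookkeeping and the lower-bound-by-inversion idea are fine once the correct unweighted engine is in place; as written, however, the engine you propose does not deliver the exponents in the statement. (Incidentally, the Peetre bound should read $\langle\lambda\eta\rangle^{s}\le\max\{1,\lambda^{s}\}\langle\eta\rangle^{s}$, not $\langle\eta\rangle^{|s|}$.)
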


The symbols $\sigma_0, \sigma_1$ in \eqref{op2} can be rewritten as time dilations of the symbols  $\tilde{\sigma}_0\in W(\cF L^1, L^\infty)$, $\tilde{\sigma}_1\in W(\cF L^1,L^\infty_{2})$ in \eqref{multsymb}. Precisely,  for $t>0$, we can write $\sigma_0(\xi)=(\tilde{\sigma}_0)_{2\pi \sqrt{t}}$, $\sigma_1(\xi)=t( \tilde{\sigma}_1)_{2\pi \sqrt{t}}$.
Using Proposition \ref{mainbothW} with $\mu_1(\infty,1)=1$, $\mu_2(\infty,1)=0$, we have, for every $R>0$,
\begin{equation*}
\|(\tilde{\sigma}_0)_{2\pi \sqrt{t}}\|_{W(\cF L^1,
    L^\infty)}\leq   \begin{cases} C_{0,R} \|\tilde{\sigma}_0\|_{W(\cF L^1,
    L^\infty)} ,\quad t\leq R\\
 C'_{0,R}  t^{\frac d 2}\|\tilde{\sigma}_0\|_{W(\cF L^1,
    L^\infty)}, \quad t\geq R.\end{cases}
 \end{equation*}
and
\begin{equation*}
\|(\tilde{\sigma}_1)_{{2\pi \sqrt{t}}}\|_{W(\cF L^1,
    L^\infty_{2})}\leq   \begin{cases} C_{1,R} \|\tilde{\sigma}_1\|_{W(\cF L^1,
    L^\infty_{2})} ,\quad t\leq R\\
 C'_{1,R} t^{\frac d2+1}\|\tilde{\sigma}_1\|_{W(\cF L^1,
    L^\infty_{2})}, \quad t\geq R.\end{cases}
 \end{equation*}
Hence the Cauchy problem \eqref{CPH} admits  a solution  $u(t,x)$ satisfying:
\begin{equation}\label{homesol}
\|u(t,\cdot)\|_{\mathcal{M}^{p,q}_{s}}\leq C_0 (1+ t)^{\frac d 2}\|u_0\|_{\mathcal{M}^{p,q}_{s}}+ C_1 t(1+t)^{\frac d 2+1} \|u_1\|_{\mathcal{M}^{p,q}_{s-2}}, \quad t> 0,
\end{equation}
for every $1\leq p, q \leq \infty$, $s\in \R$.
\vskip0.1truecm

Contrary to what happens for other equations such as the wave equation (see \cite[Thm. 4.4]{CNwave}), there is no wellposedness of \eqref{cpw} or \eqref{CPH} into the Wiener amalgam spaces $W(\cF L^p_s,L^q_\gamma)$.
The reason being the unboundedness of the multiplier $K'(t)$ into the  Wiener amalgam spaces $W(\cF L^p,L^q)$, when $p\not=q$; the case $p=q$ gives $W(\cF L^p,L^p)=\mathcal{M}^p$ (see \eqref{Wienermod}). In this case the boundedness of $K'(t)$ was first proved in \cite[Theorem 1]{benyi}.

To prove the unboundedness of  $K'(t)$, we first recall  the relationship between modulation and Wiener amalgam spaces when $p\not=q$ \cite{feichtinger90}:
\begin{proposition}\label{vm} The
Fourier transform establishes
an isomorphism  $\Fur:
\mathcal{M}^{p,q}\to W(\Fur
L^p,L^q)$.\par
\end{proposition}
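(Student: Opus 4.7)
\textbf{Proof plan for Proposition \ref{vm}.} The natural strategy is to compare the two norms pointwise on the STFT side, exploiting the fact that, up to a rotation of the time-frequency plane, the STFT of $\hat f$ is the STFT of $f$ with coordinates swapped. Concretely, the plan is to verify the identity
\begin{equation*}
V_g f(x,\omega)=e^{-2\pi i x\omega}\,V_{\hat g}\hat f(\omega,-x),\qquad (x,\omega)\in\R^{2d},
\end{equation*}
which follows at once from $\widehat{M_\omega T_x g}=e^{2\pi i x\omega}M_{-x}T_\omega\hat g$ together with Parseval's formula applied to $\langle f,M_\omega T_x g\rangle$. In particular $|V_g f(x,\omega)|=|V_{\hat g}\hat f(\omega,-x)|$.

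Once this identity is available, the proof is a direct computation. Both norms involved in the statement are independent of the window chosen in $\cS(\rd)$, so I am free to use the window $\hat g$ for the Wiener amalgam norm of $\hat f$ and the window $g$ for the modulation norm of $f$. Writing out
\begin{equation*}
\|\hat f\|_{W(\cF L^p,L^q)}=\Bigl(\int_{\rd}\Bigl(\int_{\rd}|V_{\hat g}\hat f(z,\zeta)|^p\,d\zeta\Bigr)^{q/p} dz\Bigr)^{1/q},
\end{equation*}
substituting the identity above and performing the change of variables $z=\omega$, $\zeta=-x$ (which is measure preserving), the integrand becomes $|V_g f(x,\omega)|^p$ with the inner integration in $x$ and the outer in $\omega$, which is precisely the definition of $\|f\|_{\mathcal{M}^{p,q}}$.

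Therefore $\|\hat f\|_{W(\cF L^p,L^q)}=\|f\|_{\mathcal{M}^{p,q}}$ for $f\in\cS(\rd)$, and the isomorphism extends by density to the whole of $\mathcal{M}^{p,q}$ (which is defined as the closure of $\cS(\rd)$ in the paper). Surjectivity follows from the same identity applied to $\cF^{-1}$, since $\cF$ is a bijection on $\cS'$ and preserves $\cS(\rd)$. The only mildly delicate step is bookkeeping of the sign and the phase factor in the STFT identity; everything else is Fubini plus a change of variables.
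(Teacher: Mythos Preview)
Your argument is correct and is in fact the standard textbook proof (cf.\ Gr\"ochenig's monograph cited as \cite{grochenig} in the paper). The paper itself does \emph{not} prove this proposition: it is stated as a known fact with a reference to Feichtinger \cite{feichtinger90}, so there is no ``paper's own proof'' to compare against. Your derivation of the fundamental identity $V_g f(x,\omega)=e^{-2\pi i x\omega}V_{\hat g}\hat f(\omega,-x)$ from $\widehat{M_\omega T_x g}=e^{2\pi i x\omega}M_{-x}T_\omega\hat g$ and Parseval is clean, and the change of variables that turns the $W(\cF L^p,L^q)$ norm of $\hat f$ into the $\mathcal{M}^{p,q}$ norm of $f$ is exactly the intended mechanism. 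The density argument is also appropriate here because the paper defines both spaces as closures of $\cS(\rd)$; one small remark is that for the endpoint $p=\infty$ or $q=\infty$ the norm equality still holds pointwise on $\cS(\rd)$ and extends to the closure in the same way, so nothing extra is needed.
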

The crucial tool concerns the unboundedness  of the pointwise multiplication for modulation spaces \cite[Proposition 7.1]{fio1}:
\begin{proposition}\label{contro}
The multiplication
$U_{I_{d}} f= e^{\pi i |\xi|^2}f$, $f\in\cS(\rd)$, is unbounded on
$\mathcal{M}^{p,q}$, for every $1\leq
p,q\leq\infty$, with
$p\not=q$.
\end{proposition}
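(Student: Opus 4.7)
The plan is to exhibit explicit Schwartz functions on which the ratio $\|U_{I_d}\phi\|_{\mathcal{M}^{p,q}}/\|\phi\|_{\mathcal{M}^{p,q}}$ is unbounded when $p\neq q$. The natural candidates are (complex) Gaussians, whose STFT with a Gaussian window admits a closed-form Gaussian expression. Fix the window $g(\xi)=e^{-\pi|\xi|^2}$ and consider the family
\[
\phi_z(\xi)=e^{-\pi z|\xi|^2}, \qquad z\in\C,\ \operatorname{Re}z\ge 0.
\]
A standard Gaussian integral gives
\[
V_g\phi_z(x,\omega)=(z+1)^{-d/2}\exp\bigl[-\pi|x|^2+\pi(x-i\omega)^2/(z+1)\bigr],
\]
so $|V_g\phi_z(x,\omega)|=|z+1|^{-d/2}e^{-\pi Q_z(x,\omega)}$ for a positive semidefinite real quadratic form $Q_z$ on $\rdd$, block-diagonal in the $d$ coordinate pairs. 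On each $2$-dimensional block, $Q_z$ has trace $1$ and determinant $\operatorname{Re}(z)/|z+1|^2$, so $Q_z$ degenerates precisely as $\operatorname{Re}(z)\to 0$; the null direction rotates from the $x$-axis (as $z\to 0^+$) to the antidiagonal $x+\omega=0$ (as $z\to i$).

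Completing the square in $x$ and performing the two iterated Gaussian integrals in the definition of $\|\cdot\|_{\mathcal{M}^{p,q}}$ yields, for $1\le p,q<\infty$, the closed-form estimate
\[
\|\phi_z\|_{\mathcal{M}^{p,q}}\asymp|z+1|^{-d/2}\,A(z)^{d/(2q)-d/(2p)}\,\bigl(\operatorname{Re}(z)/|z+1|^2\bigr)^{-d/(2q)},
\]
where $A(z)=\operatorname{Re}(z/(z+1))$ is the coefficient of $x_j^2$ on each $2$-block of $Q_z$. Specialising along the two rays $z=\lambda>0$ and $z=\lambda+i$ with $\lambda\to 0^+$, a direct calculation gives $A(\lambda)\sim\lambda$, $A(\lambda+i)\to 1/2$, so that
\[
\|\phi_\lambda\|_{\mathcal{M}^{p,q}}\asymp\lambda^{-d/(2p)},\qquad \|\phi_{\lambda+i}\|_{\mathcal{M}^{p,q}}\asymp\lambda^{-d/(2q)}.
\]

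Now observe that $U_{I_d}\phi_{\lambda+i}=\phi_\lambda$ and $U_{I_d}\phi_\lambda=\overline{\phi_{\lambda+i}}$, and the real-valuedness of $g$ implies $\|\overline{f}\|_{\mathcal{M}^{p,q}}=\|f\|_{\mathcal{M}^{p,q}}$. Consequently,
\[
\frac{\|U_{I_d}\phi_{\lambda+i}\|_{\mathcal{M}^{p,q}}}{\|\phi_{\lambda+i}\|_{\mathcal{M}^{p,q}}}\asymp\lambda^{d(\tfrac{1}{2q}-\tfrac{1}{2p})},\qquad \frac{\|U_{I_d}\phi_\lambda\|_{\mathcal{M}^{p,q}}}{\|\phi_\lambda\|_{\mathcal{M}^{p,q}}}\asymp\lambda^{d(\tfrac{1}{2p}-\tfrac{1}{2q})}.
\]
When $p\neq q$ exactly one of these two exponents is strictly negative, so the corresponding ratio diverges as $\lambda\to 0^+$, contradicting any uniform bound for $U_{I_d}$ on $\mathcal{M}^{p,q}$. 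The endpoint cases $p=\infty$ or $q=\infty$ are handled by the analogous computation (replacing the corresponding $L^p$-integral by the supremum in $x$ or $\omega$), or by duality using $(\mathcal{M}^{p,q})'=\mathcal{M}^{p',q'}$ and the fact that $U_{I_d}^{*}=U_{I_d}^{-1}$ has the same structure.

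The main technical obstacle is the bookkeeping that produces the closed-form asymptotic for $\|\phi_z\|_{\mathcal{M}^{p,q}}$ from the explicit $Q_z$. The geometric content, however, is transparent: multiplication by the chirp $e^{\pi i|\xi|^2}$ implements a symplectic shear on the time--frequency plane, which rotates the (near-)null direction of $V_g\phi_z$, and the mixed norm $L^p_xL^q_\omega$ is invariant under such a rotation only in the diagonal case $p=q$.
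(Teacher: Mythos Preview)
The paper does not prove this proposition; it merely states it with a reference to \cite[Proposition~7.1]{fio1}. Your argument is correct and is in fact the same strategy used in that reference: one tests the operator on the one-parameter family of Gaussians $\phi_\lambda(\xi)=e^{-\pi\lambda|\xi|^2}$, exploits that $V_g\phi_z$ is an explicit (complex) Gaussian when $g$ is Gaussian, and reads off the power of $\lambda$ in $\|\phi_\lambda\|_{\mathcal{M}^{p,q}}$ versus $\|e^{\pm\pi i|\xi|^2}\phi_\lambda\|_{\mathcal{M}^{p,q}}$ as $\lambda\to0^+$. Your computations of the trace, determinant and leading coefficient $A(z)$ of $Q_z$ check out, and the resulting asymptotics $\|\phi_\lambda\|_{\mathcal{M}^{p,q}}\asymp\lambda^{-d/(2p)}$, $\|\phi_{\lambda+i}\|_{\mathcal{M}^{p,q}}\asymp\lambda^{-d/(2q)}$ are correct. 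The geometric remark at the end---that the chirp implements a shear on phase space which rotates the degenerate direction of the Gaussian, something detected by the mixed norm precisely when $p\neq q$---is exactly the mechanism behind the counterexample and is a nice way to summarise why the diagonal case $p=q$ is special.
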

Now, we are ready to prove the unboundedness of the multiplier $K'(t)$. This immediately follows by  the unboundedness of the multiplier $T_\tau$ below.
\begin{proposition}\label{noWiener} The Fourier multiplier $T_\tau$, having symbol $\tau(\xi)=e^{\pi i t|\xi|^2}$, is
unbounded on every $W(\cF L^p,L^q)$,$1\leq p,q\leq\infty$, with $p\not=q$.
\end{proposition}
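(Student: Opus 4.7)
The plan is to transfer the question from Wiener amalgam spaces to modulation spaces via the Fourier transform and then invoke Proposition~\ref{contro}. The key observation is the intertwining identity $T_\tau = \cF^{-1}\circ M_\tau\circ \cF$, where $M_\tau g := \tau\cdot g$ denotes pointwise multiplication by $\tau$; this is an immediate rewriting of the definition \eqref{FM} of a Fourier multiplier. By Proposition~\ref{vm}, $\cF:\mathcal{M}^{p,q}\to W(\cF L^p,L^q)$ is an isomorphism; applying $\cF$ a second time and using $\cF^2 f(x)=f(-x)$ together with the invariance of $\mathcal{M}^{p,q}$ under reflection, $\cF$ is also an isomorphism $W(\cF L^p,L^q)\to \mathcal{M}^{p,q}$. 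Combining this with the intertwining relation, $T_\tau$ is bounded on $W(\cF L^p,L^q)$ if and only if $M_\tau$ is bounded on $\mathcal{M}^{p,q}$. Thus the proposition reduces to showing that pointwise multiplication by $\tau(\xi)=e^{\pi i t|\xi|^2}$ is unbounded on $\mathcal{M}^{p,q}$ whenever $p\neq q$ (we may assume $t\neq 0$, otherwise $T_\tau=\mathrm{Id}$).

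For this last reduction I would perform a dilation argument to pass to the normalized case $t=1$ covered by Proposition~\ref{contro}. Put $s=\sqrt{|t|}$ and $D_s g(\xi):=g(s\xi)$. For $t>0$ we have $\tau(\xi)=u(s\xi)$ with $u(\xi):=e^{\pi i|\xi|^2}$, and a direct verification gives the conjugation identity $M_\tau = D_s\circ M_u\circ D_s^{-1}$ on $\cS(\rd)$; the case $t<0$ is handled symmetrically, using that complex conjugation is an isometry of $\mathcal{M}^{p,q}$. Since dilations $D_s,D_s^{-1}$ are bounded isomorphisms of $\mathcal{M}^{p,q}$ (by the standard dilation properties, see \cite{sugimototomita}), boundedness of $M_\tau$ on $\mathcal{M}^{p,q}$ would force boundedness of $M_u=U_{I_d}$ on $\mathcal{M}^{p,q}$, contradicting Proposition~\ref{contro}. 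This closes the argument.

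The main delicate point is the bookkeeping between the two scales of spaces: in particular, obtaining that $\cF$ maps $W(\cF L^p,L^q)$ \emph{onto} $\mathcal{M}^{p,q}$, not merely the direction given by Proposition~\ref{vm}. This is where the reflection invariance of $\mathcal{M}^{p,q}$ together with $\cF^2$ being reflection is essential. Once the conjugation identity is set up, the dilation step and the final appeal to Proposition~\ref{contro} are essentially routine.
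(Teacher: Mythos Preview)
Your proof is correct and follows essentially the same route as the paper: conjugate the Fourier multiplier $T_\tau$ by $\cF$ to reduce to the pointwise multiplication operator on $\mathcal{M}^{p,q}$, then invoke Proposition~\ref{contro}. You are simply more explicit about two points the paper leaves implicit, namely why $\cF$ is also an isomorphism $W(\cF L^p,L^q)\to\mathcal{M}^{p,q}$ (via $\cF^2$ being reflection) and how to pass from the symbol $e^{\pi i t|\xi|^2}$ to the normalized $e^{\pi i|\xi|^2}$ by dilation; both fillings are correct and welcome.
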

\begin{proof}
For every $f\in \cS(\rd)$, using Proposition \ref{vm},  we have
\begin{equation*}
\|T_\tau f\|_{W(\cF L^p,L^q)}\asymp\|\cF(T_\tau f)\|_{\mathcal{M}^{p,q}}=\|\tau\hat{f}\|_{\mathcal{M}^{p,q}}.
\end{equation*}
Hence $T_\tau$ is bounded on  $W(\cF L^p,L^q)$ if and only if  the multiplication
$U_{I_{d}}$ is bounded on
$\mathcal{M}^{p,q}$, and this happens if and only if $p=q$, thanks to Proposition \ref{contro} (necessary conditions) and \cite[Theorem 1]{benyi} (sufficient conditions).
\end{proof}

\section{Local  wellposedness of NLVP on modulation spaces}
In this section we present the wellposedness
result on modulation spaces. To establish  nonlinear estimates
on appropriate modulation spaces we shall use the lemma below. It was first proved in
\cite{F1} (see also \cite[Corollary 4.2]{baoxiang}).
\begin{lemma} \label{L2} Let $s\geq 0$,
 $1\leq p\leq p_i\leq\infty$, $1\leq r,q_i\leq\infty$, $N\in\N$, satisfy
\begin{equation}\label{indices}
\sum_{i=1}^N \frac
1{p_i}=\frac1p,\quad
\sum_{i=1}^N \frac
1{q_i}=N-1+\frac1r,
                \end{equation}
                then we have
$$
\| \prod_{i=1}^N
u_i\|_{\mathcal{M}^{p,r}_s}\leq
\prod_{i=1}^N \|
u_i\|_{\mathcal{M}^{p_i,q_i}_s}.
$$
                \end{lemma}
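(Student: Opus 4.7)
The plan is to realize the $\mathcal{M}^{p,r}_s$-norm via the STFT with a multiplicatively convenient window. Concretely, I would fix a nonzero $g_0\in\cS(\rd)$ and compute all STFTs on both sides with respect to windows $g_i:=g_0$ and $g:=g_1\cdots g_N\in\cS(\rd)$. Because $\overline{T_x g}=\prod_{i=1}^N\overline{T_x g_i}$, applying the identity $\cF(f_1\cdots f_N)=\hat f_1\ast\cdots\ast\hat f_N$ to $f_i=u_i\,\overline{T_x g_i}$ immediately yields the fundamental relation
\[
V_g(u_1\cdots u_N)(x,\omega)=\bigl[V_{g_1}u_1(x,\cdot)\ast_\omega\cdots\ast_\omega V_{g_N}u_N(x,\cdot)\bigr](\omega),
\]
so the product of functions is translated into an $N$-fold convolution in the frequency variable.

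Next I would insert the weight and invoke the submultiplicativity $\langle\omega_1+\cdots+\omega_N\rangle^s\leq C_s\prod_{i=1}^N\langle\omega_i\rangle^s$, valid for $s\geq 0$, to obtain the pointwise bound
\[
\langle\omega\rangle^s\bigl|V_g(u_1\cdots u_N)(x,\omega)\bigr|\leq C\bigl[G_1(x,\cdot)\ast_\omega\cdots\ast_\omega G_N(x,\cdot)\bigr](\omega),
\]
with $G_i(x,\omega):=\langle\omega\rangle^s|V_{g_i}u_i(x,\omega)|$. This is the step where the hypothesis $s\geq 0$ enters decisively; after it, the problem is a pure convolution estimate.

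I would then take the $L^p_x$-norm of both sides: Minkowski's integral inequality pushes it inside the $(N-1)$-fold integral defining the convolution in $\omega$, and H\"older in $x$, with the exponent relation $\sum_{i=1}^N 1/p_i=1/p$, gives
\[
\Bigl\|\prod_{i=1}^N G_i(\cdot,\omega_i)\Bigr\|_{L^p_x}\leq \prod_{i=1}^N H_i(\omega_i),\qquad H_i(\omega):=\langle\omega\rangle^s\|V_{g_i}u_i(\cdot,\omega)\|_{L^{p_i}_x}.
\]
Finally, taking the $L^r_\omega$-norm and applying the $N$-fold Young inequality, whose index condition is exactly $\sum_{i=1}^N 1/q_i=N-1+1/r$, produces
\[
\|H_1\ast\cdots\ast H_N\|_{L^r_\omega}\leq \prod_{i=1}^N\|H_i\|_{L^{q_i}_\omega}=\prod_{i=1}^N\|u_i\|_{\mathcal{M}^{p_i,q_i}_s}.
\]
The main (and rather mild) obstacle is bookkeeping: one must arrange matters so that the two index relations in \eqref{indices} line up exactly as the H\"older balance in $x$ and the Young balance in $\omega$, which is precisely what the statement assumes. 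Once the STFT-convolution identity has been observed and the weight has been split by submultiplicativity, the remainder is a routine chain of Minkowski, H\"older, and Young.
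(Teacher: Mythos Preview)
Your argument is correct: the convolution identity for the STFT of a product (with a product window), weight submultiplicativity for $s\ge 0$, Minkowski plus H\"older in $x$, and $N$-fold Young in $\omega$ are exactly the right ingredients, and the two index relations in \eqref{indices} are precisely the H\"older and Young balances. The only cosmetic point is that the submultiplicativity step produces a constant $C_s$, so the inequality is really $\lesssim$ rather than $\le$; this matches how the lemma is used in the paper.

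As for comparison with the paper: the paper does not give its own proof of this lemma but simply quotes it from \cite{F1} and \cite[Corollary~4.2]{baoxiang}. Your direct argument is essentially the standard proof found in those references (and in Gr\"ochenig's book for the algebra case), so there is no substantive difference in strategy---you have just written out what the paper cites.
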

 In particular, for
$p_i=N p$, $q_i=q$,
$i=1,\dots N$, we get
\begin{equation}\label{norms}
\|\prod_{i=1}^N
u_i\|_{\mathcal{M}^{p,r}_s}\leq
\prod_{i=1}^N\|
u_i\|_{\mathcal{M}^{p,q}_s},\quad
\frac{N}{q}=N-1+\frac1r.
\end{equation}
The proof of the local existence theory uses  the following variant of the contraction mapping theorem (see, e.g.,
\cite[Proposition
1.38]{tao}).
\begin{proposition}\label{AIA}
Let $\cN$ and $\cT$ be two
Banach spaces. Suppose we are
given a linear operator
$\cB:\cN\to \cT$ with the
bound
\begin{equation}\label{aia1}
\|\cB f\|_{\cT}\leq
C_0\|f\|_{\cN}
\end{equation}
for all $f\in\cN$ and some
$C_0>0$, and suppose that we
are given a nonlinear
operator $F:\cT\to\cN$ with
$F(0)=0$, which obeys the
Lipschitz bounds
\begin{equation}\label{aia2}
\|F(u)-F(v)\|_{\cN}\leq\frac{1}{2C_0}\|u-v\|_{\cT}
\end{equation}
for all $u,v$ in the ball
$B_\mu:=\{u\in\cT:
\|u\|_\cT\leq \mu \}$, for
some $\mu>0$. Then, for all
$u_{\rm lin}\in B_{\mu/2}$
there exists a unique
solution $u\in B_\mu$ to the
equation
\[
u=u_{\rm lin}+\cB F(u),
\]
with the map $u_{lin}\mapsto
u$ Lipschitz with constant at
most $2$ (in particular,
$\|u\|_{\cT}\leq 2\|u_{\rm
lin}\|_{\cT}$).
\end{proposition}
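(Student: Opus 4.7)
The plan is to apply the Banach fixed point theorem to the map $\Phi_{u_{\rm lin}}:B_\mu\to\cT$ defined by $\Phi_{u_{\rm lin}}(u)=u_{\rm lin}+\cB F(u)$, showing first that it preserves the closed ball $B_\mu$ and then that it is a strict contraction with Lipschitz constant $1/2$. Existence and uniqueness of a fixed point in $B_\mu$ then follows from the standard Banach contraction principle, and Lipschitz dependence on $u_{\rm lin}$ will drop out by comparing two fixed points.

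First I would verify the self-mapping property. For $u\in B_\mu$, the hypothesis $F(0)=0$ together with \eqref{aia2} applied with $v=0$ gives $\|F(u)\|_\cN\leq\tfrac{1}{2C_0}\|u\|_\cT\leq\tfrac{\mu}{2C_0}$. Combined with \eqref{aia1} and the assumption $\|u_{\rm lin}\|_\cT\leq \mu/2$, this yields
\[
\|\Phi_{u_{\rm lin}}(u)\|_\cT\leq \|u_{\rm lin}\|_\cT+C_0\|F(u)\|_\cN\leq \frac{\mu}{2}+\frac{\mu}{2}=\mu,
\]
so $\Phi_{u_{\rm lin}}(B_\mu)\subseteq B_\mu$.

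Next I would check the contraction estimate. For $u,v\in B_\mu$, combining \eqref{aia1} and \eqref{aia2} gives
\[
\|\Phi_{u_{\rm lin}}(u)-\Phi_{u_{\rm lin}}(v)\|_\cT=\|\cB(F(u)-F(v))\|_\cT\leq C_0\|F(u)-F(v)\|_\cN\leq \tfrac{1}{2}\|u-v\|_\cT.
\]
Since $B_\mu$ is a closed subset of the Banach space $\cT$, the Banach fixed point theorem supplies a unique $u\in B_\mu$ with $u=\Phi_{u_{\rm lin}}(u)$, i.e., $u=u_{\rm lin}+\cB F(u)$.

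Finally, for the Lipschitz dependence, let $u,v\in B_\mu$ be the fixed points corresponding to data $u_{\rm lin},v_{\rm lin}\in B_{\mu/2}$. Subtracting the two fixed point identities and applying the contraction estimate again,
\[
\|u-v\|_\cT\leq \|u_{\rm lin}-v_{\rm lin}\|_\cT+\tfrac{1}{2}\|u-v\|_\cT,
\]
whence $\|u-v\|_\cT\leq 2\|u_{\rm lin}-v_{\rm lin}\|_\cT$. Specialising to $v_{\rm lin}=0$ (which gives fixed point $v=0$ thanks to $F(0)=0$) yields the stated bound $\|u\|_\cT\leq 2\|u_{\rm lin}\|_\cT$. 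No step presents a real obstacle: this is the textbook contraction-mapping argument, and the only place requiring care is keeping track of the factor $C_0$ so that the contraction rate $\tfrac{1}{2}$ matches the constant $\tfrac{1}{2C_0}$ in \eqref{aia2}, and verifying that the ball $B_{\mu/2}$ for $u_{\rm lin}$ is exactly what is needed to close the self-mapping estimate.
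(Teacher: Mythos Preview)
Your argument is correct and is exactly the standard contraction-mapping proof. The paper does not actually prove this proposition: it states it as a ``variant of the contraction mapping theorem'' and refers to \cite[Proposition~1.38]{tao} for the proof, so your write-up supplies precisely what the reference would contain.
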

We are now ready  to prove Theorem \ref{T1}.
\begin{proof}[Proof of Theorem \ref{T1}]
We first observe that, by  \eqref{homesol}, for every
$1\leq p\leq\infty$, the
multiplier $K'(t)$, in \eqref{op2} can
be extended to a bounded
operator on
$\mathcal{M}^{p,1}_s$, with
\begin{equation}\label{G1} \|K'(t)
u_0\|_{\mathcal{M}^{p,1}_s}\leq
C_0 (1+ t)^{\frac d 2} \|u_0\|_{\mathcal{M}^{p,1}_s},\quad
t>0.
\end{equation}
Similarly, the multiplier
operator $K(t)$
   satisfies the
  estimate
\begin{equation}\label{G2}
\|K(t)
u_1\|_{\mathcal{M}^{p,1}_s}\leq
C_1 t(1+t)^{\frac d 2+1}\|u_1\|_{\mathcal{M}^{p,1}_{s-2}},\
t>0,\end{equation} for
every $1\leq p\leq \infty$.\par
 Now we
are going to apply Proposition \ref{AIA} with
$\cT=\cN=C^0([0,T];\mathcal{M}^{p,1}_s)$, where  $T>0$ will be
chosen  later on, with the nonlinear operator $\cB$ given by the
Duhamel operator in \eqref{op2}. Here $u_{\rm
lin}:=K'(t)u_0+K(t)u_1$ is in the ball $B_{\mu/2}\subset\cT$ by
\eqref{G1}, \eqref{G2}, if $\mu$ is sufficiently large, depending
on $R$. Using Minkowski integral inequality and \eqref{G2}, we
obtain \eqref{aia1}. Namely,
$$\|\cB u\|_{\mathcal{M}^{p,1}_s}\leq T C_T \|u\|_{_{\mathcal{M}^{p,1}_{s-2}}}\leq T C_T \|u\|_{_{\mathcal{M}^{p,1}_{s}}},
$$
with $C_T=C_1 \sup_{t\in[0,T]}t(1+t)^{\frac d 2+1}$ and using the inclusion $\mathcal{M}^{p,1}_s\hookrightarrow
\mathcal{M}^{p,1}_{s-2}$.
\par Condition \eqref{aia2} is already proved in \cite[Theorem 4.1]{CNwave}. There, applying the relation
\eqref{norms} for $q=r=1$, the following estimate is obtained:
$$\|F(u)-F(v)\|_{\cM^{p,1}_s}\leq
 \|u-v\|_{\cM^{p,1}_s}\sum_{j,k,l,m\geq0}(|c_{j,k,l,m}|+
 |c'_{j,k,l,m}|)
 \|u\|_{\mathcal{\cM}^{p,1}_{s}}^{j+k}
\|v\|_{\mathcal{\cM}^{p,1}_{s}}^{l+m}
<\infty,$$
for $u,v\in
\cM^{p,1}_s$.  This expression is  $\leq C_\mu \|u-v\|_{\cM^{p,1}_s}$
 if $u,v\in B_\mu$.
Hence, by choosing $T$
sufficiently small we
conclude the proof of
existence, and also that of
uniqueness among the solution
in $\cT$ with norm $O(R)$.
Finally, this last constraint can be
eliminated by a standard
continuity argument (cf. the
proof of Proposition 3.8 in
\cite{tao}).
\end{proof}

A better result can be obtained when considering  the nonlinearity
\begin{equation} \label{PW}
F(u)=F_k(u)=\lambda
|u|^{2k}u=\lambda
u^{k+1}\bar{u}^k, \quad
\lambda\in\mathbb{C},\
k\in\N.
\end{equation}

\begin{theorem}\label{T2}
Let $F(u)$ be as in \eqref{PW},
 $1\leq p\leq \infty$, $s\geq2$, and
 \begin{equation}\label{indr}
                q'>kd.
\end{equation}
For every $R$ there exists
$T>0$ such that for every
$(u_0,u_1)$ in the ball $B_R$
of center $0$ and radius $R$
in $\mathcal{M}^{p,q}_s(\rd)
\times\mathcal{M}^{p,q}_{s-2}(\rd)$
there exists a unique
solution $u\in
\cC^0([0,T];\mathcal{M}^{p,q}_s(\rd))$
to \eqref{solop}. Furthermore
the map $(u_0,u_1)\mapsto u$
from $B_R$ to $
\cC^0([0,T];\mathcal{M}^{p,q}_s(\rd))$
is Lipschitz continuous.
\end{theorem}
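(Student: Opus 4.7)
The plan is to mirror the proof of Theorem \ref{T1}, applying the contraction mapping Proposition \ref{AIA} with Banach spaces $\cT = C^0([0,T]; \mathcal{M}^{p,q}_s(\rd))$ and $\cN = C^0([0,T]; \mathcal{M}^{p,q}_{s-2}(\rd))$, and with $\cB$, $u_{\text{lin}}(t) = K'(t) u_0 + K(t) u_1$ as in \eqref{op2}. The linear estimate \eqref{homesol}, combined with Minkowski's inequality in time, places $u_{\text{lin}}$ in $B_{\mu/2}\subset\cT$ for $\mu = \mu(R)$ sufficiently large and produces the bound $\|\cB f\|_\cT \leq C_0(T)\|f\|_\cN$ required by \eqref{aia1}, with $C_0(T) = O(T^2)$ as $T\to 0$.

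The heart of the argument is the Lipschitz estimate \eqref{aia2}, which reads
$$\|F(u) - F(v)\|_{\mathcal{M}^{p,q}_{s-2}} \lesssim_\mu \|u - v\|_{\mathcal{M}^{p,q}_s}, \qquad u,v\in B_\mu.$$
Writing $F(u) - F(v) = \lambda\bigl(u^{k+1}\bar u^k - v^{k+1}\bar v^k\bigr)$ and expanding by a telescoping argument reduces the task to bounding finitely many products of $2k+1$ factors, each containing a single factor $u-v$ or $\bar u - \bar v$ and $2k$ factors drawn from $\{u,v,\bar u,\bar v\}$.

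To estimate such a product I would combine the bilinear case of Lemma \ref{L2} (indices $p_1=\infty,\,p_2=p,\,q_1=1,\,q_2=q$),
$$\|fg\|_{\mathcal{M}^{p,q}_\sigma} \lesssim \|f\|_{\mathcal{M}^{\infty,1}_\sigma}\,\|g\|_{\mathcal{M}^{p,q}_\sigma}, \qquad \sigma\geq 0,$$
with the Sobolev-type embedding $\mathcal{M}^{p,q}_{\sigma+\delta}\hookrightarrow \mathcal{M}^{\infty,1}_\sigma$ valid for every $\delta > d/q'$, which follows from H\"older's inequality on the frequency variable together with the monotone embedding $\mathcal{M}^{p,q}_\sigma\hookrightarrow \mathcal{M}^{\infty,q}_\sigma$. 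Shifting the smoothness parameter yields the basic two-factor estimate
$$\|fg\|_{\mathcal{M}^{p,q}_{s-\delta}} \lesssim \|f\|_{\mathcal{M}^{p,q}_s}\,\|g\|_{\mathcal{M}^{p,q}_s}, \qquad \delta > d/q'.$$
Iterating it $2k$ times on the $(2k+1)$-fold product brings the target smoothness down from $s$ to $s-2k\delta$. The hypothesis $q' > kd$ is precisely what allows a choice $\delta > d/q'$ with $2k\delta < 2$, placing the product in $\mathcal{M}^{p,q}_{s-2k\delta}\hookrightarrow \mathcal{M}^{p,q}_{s-2}$, as demanded by the Duhamel framework; the $u-v$ factor contributes $\|u-v\|_{\mathcal{M}^{p,q}_s}$ via the same inclusion, while the remaining $2k$ factors are each bounded by $\mu$.

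The main obstacle I expect is precisely this balance between the accumulated smoothness loss $2k\delta$ from iteration and the $2$-unit smoothness gain afforded by $\cB$: the threshold is exactly $q' > kd$. Once the Lipschitz bound is in hand, Proposition \ref{AIA} applies: choose $T>0$ so small that $C_0(T)\cdot C_\mu < 1/2$ to obtain the unique fixed-point solution $u\in B_\mu$ and the Lipschitz dependence $(u_0,u_1)\mapsto u$ from $B_R$ to $\cT$. Uniqueness without the $O(R)$ norm constraint follows from the standard continuation argument used at the end of the proof of Theorem \ref{T1}.
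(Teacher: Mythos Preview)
Your argument is correct and lands on the same threshold $q'>kd$, but the route to the nonlinear Lipschitz estimate differs from the paper's. The paper applies Lemma~\ref{L2} once in its full $(2k{+}1)$-linear form \eqref{norms} (all $p_i=(2k{+}1)p$, all $q_i=q$), which places the product in $\mathcal{M}^{p,r}_s$ with $\tfrac{2k+1}{q}=2k+\tfrac1r$, and then uses a single Sobolev-type embedding $\mathcal{M}^{p,r}_s\hookrightarrow\mathcal{M}^{p,q}_{s-2}$, valid precisely when $d(1/q-1/r)=2kd/q'<2$, i.e.\ when $q'>kd$. You instead take the bilinear instance of Lemma~\ref{L2} with indices $(\infty,1)\times(p,q)\to(p,q)$, feed one factor at a time through the embedding $\mathcal{M}^{p,q}_{\sigma+\delta}\hookrightarrow\mathcal{M}^{\infty,1}_\sigma$ (any $\delta>d/q'$), and iterate $2k$ times, accumulating a total loss $2k\delta$; the constraint $2k\delta<2$ then reproduces the same condition. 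The paper's version is a bit more economical---one multilinear estimate and one embedding---while yours is more hands-on and makes transparent how each of the $2k$ extra factors costs exactly $d/q'$ units of smoothness against the $2$-unit gain supplied by $\cB$. Both are valid and, apart from this bookkeeping, the contraction-mapping framework you set up (choice of $\cT,\cN$, bound $C_0(T)\to0$, continuity argument for unconditional uniqueness) matches the paper's.
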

\begin{proof} Here we set $\cT=
\cC^0([0,T];\mathcal{M}^{p,q}_s(\rd))$,
$\cN=
\cC^0([0,T];\mathcal{M}^{p,q}_{s-2}(\rd))$.
 Now
$F_k(z)-F_k(w)=(z-w)p_k(z,w)+(\overline{z}-\overline{w})
q_k(z,w)$, where $p_k,q_k$
are polynomials of degree
$2k$ in
$z,w,\overline{z},\overline{w}$
($q_0(z,w)\equiv0$). Using
\eqref{norms} for
$1\leq p\leq\infty$,
 we obtain
$$
\|F(u)-F(v)\|_{\mathcal{M}^{p,r}_{s-2}}\leq
C
|\lambda|\|u-v\|_{\mathcal{M}^{p,q}_{s-2}}
(\|u\|_{\mathcal{M}^{p,q}_{s-2}}^{2k}+
\|v\|_{\mathcal{M}^{p,q}_{s-2}}^{2k}),$$
with
\begin{equation}\label{esr}
r=\frac q{2k(1-q)+1}.
\end{equation}
\noindent The inclusion
relations for modulation
spaces
 \cite{F1,baoxiang2} fulfill
$$\mathcal{M}^{p,r}_s\hookrightarrow
\mathcal{M}^{p,q}_{s-2}
\quad\mbox{if}\quad
\frac{d}{q}-\frac{d}{r}<2,
$$
which, combined with \eqref{esr}, yields  \eqref{indr}. So
\eqref{aia2} is verified and
we are done.
\end{proof}

We observe that condition \eqref{indr} let us consider initial data $u_0,u_1$ in rougher spaces than those in the Cauchy problem for  the wave equation \cite[Theorem 4.3]{CNwave}.

\section*{Acknowledgements}
The authors would like to thank Professors Luigi Rodino and Fabio
Nicola for fruitful conversations and comments. We are grateful to
the anonymous referee for his valuable comments.

\end{document}